\def\R{\mathbb{R}}
\def\N{\mathbb{N}}
\def\C{\mathbb{C}}
 \newcommand{\ds}{\text{\rm d}s}
 \newcommand{\dt}{\text{\rm d}t}
 \newcommand{\dx}{\text{\rm d}x}
 \newcommand{\dxi}{\text{\rm d}\xi}
  \newcommand{\deta}{\text{\rm d}\eta}
 \newcommand{\dnu}{\text{\rm d}\nu}
  \newcommand{\domega}{\text{\rm d}\omega}
\newcommandx{\emanuel}[2][1=]{\todo[linecolor=green,backgroundcolor=green!25,bordercolor=black,#1]{#2}}
\newcommandx{\diogo}[2][1=]{\todo[linecolor=orange,backgroundcolor=orange!25,bordercolor=orange,#1]{#2}}
\newcommandx{\mateus}[2][1=]{\todo[linecolor=blue,backgroundcolor=blue!25,bordercolor=blue,#1]{#2}}
\newcommandx{\danger}[2][1=]{\todo[linecolor=red,backgroundcolor=red!25,bordercolor=blue,#1]{#2}}
\newtheorem{theorem}{Theorem}
\newtheorem{proposition}[theorem]{Proposition}
\DeclareFontFamily{U}{tipa}{}
\DeclareFontShape{U}{tipa}{m}{n}{<->tipa10}{}
\newcommand{\arc@char}{{\usefont{U}{tipa}{m}{n}\symbol{62}}}%
\newcommand{\arc}[1]{\mathpalette\arc@arc{#1}}
\newcommand{\arc@arc}[2]{%
  \sbox0{$\m@th#1#2$}%
  \vbox{
    \hbox{\resizebox{\wd0}{\height}{\arc@char}}
    \nointerlineskip
    \box0
  }%
}
\numberwithin{equation}{section}
\newcommand{\intav}[1]{\mathchoice {\mathop{\vrule width 6pt height 3 pt depth  -2.5pt
\kern -8pt \intop}\nolimits_{\kern -6pt#1}} {\mathop{\vrule width
5pt height 3  pt depth -2.6pt \kern -6pt \intop}\nolimits_{#1}}
{\mathop{\vrule width 5pt height 3 pt depth -2.6pt \kern -6pt
\intop}\nolimits_{#1}} {\mathop{\vrule width 5pt height 3 pt depth
-2.6pt \kern -6pt \intop}\nolimits_{#1}}}
\newcommand{\intavl}[1]{\mathchoice {\mathop{\vrule width 6pt height 3 pt depth  -2.5pt
\kern -8pt \intop}\limits_{\kern -6pt#1}} {\mathop{\vrule width 5pt
height 3  pt depth -2.6pt \kern -6pt \intop}\nolimits_{#1}}
{\mathop{\vrule width 5pt height 3 pt depth -2.6pt \kern -6pt
\intop}\nolimits_{#1}} {\mathop{\vrule width 5pt height 3 pt depth
-2.6pt \kern -6pt \intop}\nolimits_{#1}}}
\title[Gaussians never extremize]{Gaussians never extremize Strichartz inequalities for hyperbolic paraboloids}
\author[Carneiro]{Emanuel Carneiro}
\author[Oliveira]{Lucas Oliveira}
\author[Sousa]{Mateus Sousa}
\address{
ICTP - The Abdus Salam International Centre for Theoretical Physics\\
Strada Costiera, 11, I - 34151, Trieste, Italy.}
\address{IMPA - Instituto de Matem\'{a}tica Pura e Aplicada\\
Rio de Janeiro - RJ, Brazil, 22460-320.}
\email{carneiro@ictp.it}
\email{carneiro@impa.br}
\address{Department of Pure and Applied Mathematics, Universidade Federal do Rio Grande do Sul, Porto Alegre, RS, Brazil 91509-900.}
\email{lucas.oliveira@ufrgs.br}
\address{Mathematisches Institut der Universit\"{a}t M\"{u}nchen, Theresienstr. 39, D-80333, M\"{u}nchen, Germany.}
\email{sousa@math.lmu.de}
\date{\today}                                           
\begin{document}

\subjclass[2010]{42B10, 58E35}
\keywords{Gaussians, hyperbolic paraboloid, Strichartz estimates, Fourier restriction, extremizers, critical points.}
\begin{abstract} 
For $\xi = (\xi_1, \xi_2, \ldots, \xi_d) \in \R^d$ let $Q(\xi) := \sum_{j=1}^d \sigma_j \xi_j^2$ be a quadratic form with signs $\sigma_j \in \{\pm1\}$ not all equal. Let $S \subset \R^{d+1}$ be the hyperbolic paraboloid given by $S = \big\{(\xi, \tau) \in \R^{d}\times \R \ : \ \tau = Q(\xi)\big\}$. In this note we prove that Gaussians never extremize an $L^p(\R^d) \to L^{q}(\R^{d+1})$ Fourier extension inequality associated to this surface.
\end{abstract}

\maketitle 

\section{Introduction}
Let $\xi = (\xi_1, \xi_2, \ldots, \xi_d) \in \R^d$ and consider a quadratic form $Q(\xi) := \sum_{j=1}^d \sigma_j \xi_j^2$, where each sign $\sigma_j \in \{\pm1\}$ is given. Associated to this quadratic form, let $S \subset \R^{d+1}$ be the surface given by $S = \big\{(\xi, \tau) \in \R^{d}\times \R \ : \ \tau = Q(\xi)\big\}$. If all the $\sigma_j$'s are equal we call such a surface a {\it paraboloid}, and in case we have different signatures we call such a surface a {\it hyperbolic paraboloid}. For $f:\R^d \to \C$ and $(x,t) \in \R^d \times \R$ we consider the Fourier extension operator associated to $S$:
\begin{equation}\label{operator}
Tf(x, t) = \int_{\R^d} e^{i x\cdot \xi + i t Q(\xi)} f(\xi)\,\dxi.
\end{equation}
Here we are concerned with estimates of the type 
\begin{equation}\label{20191004_12:11am}
\|Tf\|_{L^q(\R^{d+1})} \leq C \,\|f\|_{L^p(\R^{d})},
\end{equation}
which are fundamental in harmonic analysis. Dilation invariance tells us that such a global estimate can only hold if $q = (d+2)p'/d$ and inequality \eqref{20191004_12:11am} obviously holds true for $p=1$. The celebrated work of Strichartz \cite{St77} establishes it for $p=2$ (and hence for $1\leq p\leq 2$) but the full range for which \eqref{20191004_12:11am} holds is still unknown. The {\it restricton conjecture} predicts its validity for $1 \leq p < 2(d+1)/d$, see for instance \cite{Var}.

\smallskip 

When \eqref{20191004_12:11am} is known to hold, the search for its sharp form and extremizers is a deep and beautiful problem that has only been solved in a couple of cases, all when $S$ is the usual {\it paraboloid}. In this situation, when $p=2$, it is conjectured that Gaussians should extremize \eqref{20191004_12:11am}. This was established by Foschi  \cite{Fo07} in dimensions $d=1$ and $d=2$, and it is an important open problem in higher dimensions. A crucial point for Foschi's argument in low dimensions is the fact that the exponent $q$ is even, which yields a certain convolution structure to the problem. A natural question of interest at the time was if Gaussians could be extremizers in the general $L^p \to L^q$ setting for {\it paraboloids}, but this was disproved by Christ and Quilodr\'{a}n \cite{CQ}, who showed that  Gaussians are not even critical points of the inequality when $1 < p < 2(d+1)/d$ and $p \neq 2$.

\smallskip The purpose of this note is to establish a negative result analogous to that of Christ and Quilodr\'{a}n \cite{CQ} in all the cases of {\it hyperbolic paraboloids}.

\begin{theorem}\label{Thm1}
Let $S$ be a hyperbolic paraboloid and $1 < p < 2(d+1)/d$. Then Gaussians are not critical points of \eqref{20191004_12:11am} (in case the inequality holds).
\end{theorem}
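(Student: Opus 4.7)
The plan is to test the Euler--Lagrange equation associated with critical points of \eqref{20191004_12:11am}. Taking the first variation of $\|Tf\|_q/\|f\|_p$ shows that any critical $f$ must satisfy
$$T^*\big(|Tf|^{q-2} Tf\big)(\xi) = \lambda\, |f|^{p-2} f(\xi)$$
for some $\lambda\in\R$, and the goal is to show that no Gaussian solves this equation in the range $1<p<2(d+1)/d$ (note that in this range one has $q\neq 2$, which will be crucial). After exploiting the symmetries of \eqref{20191004_12:11am}---translations in $(x,t)$, modulations $f(\xi)\mapsto f(\xi)e^{ix_0\cdot\xi + it_0 Q(\xi)}$, dilations, and the Lorentz action of $O(d_+,d_-)$ preserving $Q$---it suffices to verify the failure of this equation on the normalized Gaussian $f(\xi)=e^{-|\xi|^2}$.

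For this $f$, separation of variables gives
$$Tf(x,t) = \pi^{d/2}(1+t^2)^{-d/4}\, e^{is\theta(t)/2}\, e^{-|x|^2/(4(1+t^2)) - it Q(x)/(4(1+t^2))},$$
where $s:=\sum_{j=1}^d\sigma_j$ is the signature trace and $\theta(t):=\arctan t$. Note that $|Tf|$ is $O(d)$-symmetric in $x$, while the phase of $Tf$ encodes the hyperbolic structure through the global factor $e^{is\theta/2}$ and the oscillation $e^{-itQ(x)/(4(1+t^2))}$. A second Gaussian integration in $x$ and a rearrangement then reduce $T^*\big(|Tf|^{q-2}Tf\big)(\xi)$ to a single integral over $t$; by $O(d_+)\times O(d_-)$-invariance, its dependence on $\xi$ enters only through $|\xi|^2$ and $Q(\xi)$, and the integrand carries a factor $\exp[-C_1(t)|\xi|^2 + iC_2(t)Q(\xi)]$ with $C_2(t)$ proportional to $t\cdot q(2-q)$.

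Since the right-hand side of the Euler--Lagrange equation equals $\lambda e^{-(p-1)|\xi|^2}$ and is independent of $Q(\xi)$, the proof reduces to showing that the $Q(\xi)$-dependence of the left-hand side is genuinely nontrivial. Writing $F(u,v)$ for the left-hand side as a function of $u=|\xi|^2$ and $v=Q(\xi)$, I would split the analysis according to $s$. If $s\neq 0$, the asymmetric phase $e^{is(\theta-\phi)/2}$ (with $\phi(t):=\arctan(t/(q-1))$) produces a nonvanishing $\partial_v F(0,0)$: parity in $t$ reduces the relevant integral to a multiple of $\int r^{\alpha}R^{-d/2}\sin(s(\theta-\phi)/2)(t/R^2)\,\dt$ with $R(t):=\sqrt{(q-1)^2+t^2}$, whose integrand has a definite sign on $(0,\infty)$ when $q\neq 2$. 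If $s=0$ (which forces $d_+=d_-$), the global phase trivializes, the first-order test vanishes by parity, and one passes to second order: $\partial_v^2 F(0,0)$ is proportional to $-(q(2-q))^2\int r^{\alpha}R^{-d/2-4}t^2\,\dt$, which is strictly nonzero for $q\neq 2$, whereas the right-hand side satisfies $\partial_v^2=0$ identically. Either case yields a contradiction.

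The main obstacle I expect is the $s=0$ regime, where the natural first-order obstruction disappears and a genuine second-order analysis (with careful sign control of the oscillatory integrand in $t$) becomes necessary; a secondary subtlety is that, lacking the full $O(d)$ symmetry of the elliptic case, the reduction of an arbitrary Gaussian to a canonical isotropic form requires invoking the Lorentz boosts $O(d_+,d_-)$ in combination with modulations and dilations, and one must ensure that the resulting normalization remains compatible with the parity arguments driving the main computation.
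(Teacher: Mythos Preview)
Your setup --- the Euler--Lagrange equation, the explicit formula for $Tf$, and the reduction of $T^*(|Tf|^{q-2}Tf)$ to a single $t$--integral depending on $u=|\xi|^2$ and $v=Q(\xi)$ --- is correct and matches the paper's. The gap is in the case $s\neq 0$. You claim that $\partial_v F(0,0)$ reduces to a constant times
\[
\int_0^\infty r^{\alpha}\,R^{-d/2-2}\,t\,\sin\!\Big(\tfrac{s}{2}\big(\theta(t)-\phi(t)\big)\Big)\,\dt
\]
and that the integrand has a definite sign. But $\theta(t)-\phi(t)=\arctan t-\arctan\!\big(\tfrac{t}{q-1}\big)$ attains its maximum $2\arctan\sqrt{q-1}-\tfrac{\pi}{2}$ at $t=\sqrt{q-1}$, and this maximum tends to $\pi/2$ as $q\to\infty$ (equivalently $p\to 1^+$). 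Hence once $|s|\geq 5$ and $q$ is large enough, the argument $\tfrac{s}{2}(\theta-\phi)$ crosses $\pi$ and the sine changes sign on $(0,\infty)$; the integral may then vanish, and in any event your ``definite sign'' assertion fails. Since $|s|=|d_+-d_-|$ can be as large as $d-2$, this regime already arises for $d\geq 7$, so the argument as written does not cover the full theorem.

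The paper avoids any sign analysis by extracting \emph{infinitely many} conditions rather than one or two. Working in the variables $r^\pm=|\xi^\pm|^2$ and differentiating the Euler--Lagrange identity $k$ times in each at the origin yields a family of vanishing moments \eqref{20191007_16:40pm} indexed by $k\in\N$; after a monotone change of variables this says that all polynomial moments of a fixed nonzero continuous integrable function vanish, and Weierstrass approximation gives the contradiction. Your second-derivative test for $s=0$ is clean and actually simpler than the paper's treatment of that case, but for $s\neq 0$ you would need an argument of comparable strength (all moments, or an analyticity/uniqueness argument) to replace the single-derivative step that breaks down for large $|s|$.

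A secondary issue you flag yourself: the reduction of an arbitrary Gaussian to $e^{-|\xi|^2}$ via $O(d_+,d_-)$, modulations, and dilations is not clearly valid. The Lorentz group does not act transitively on positive definite forms modulo scaling (for $d_+=d_-=1$, a hyperbolic rotation applied to $\mathrm{diag}(a,b)$ preserves $a-b$), and the single modulation parameter $t_0$ only shifts $A$ by $it_0\,\mathrm{diag}(\sigma)$. The paper, like Christ--Quilodr\'an, simply treats the isotropic Gaussian; if you intend the statement for all Gaussians you should justify this reduction separately.
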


A particularly attractive case of \eqref{20191004_12:11am} in the case of {\it hyperbolic paraboloids} is when $d=2$ and $S = \{(\xi_1,\xi_2, \tau) \in \R^3 \ : \ \tau = \xi_1^2 - \xi_2^2\}$. This is the only case when the exponent $q$ is even ($q=4$). The corresponding $L^2(\R^2)\to L^4(\R^3)$ Fourier extension estimate for this saddle surface and its connections with hyperbolic Schr\"{o}dinger equations have been studied by Rogers and Vargas \cite{RV}, and a profile decomposition for solutions of such equations was established by Dodson, Marzuola, Pausader and Spirn \cite{DMPS}. In the last section of the paper we present a brief discussion on some subtle points of this Fourier extension estimate, in particular addressing the difficulties of applying previous approaches by Foschi \cite{Fo07} and Hundertmark and Zharnitsky \cite{HZ06} for the {\it paraboloid}.

\section{Proof of Theorem \ref{Thm1}}

We divide the proof in several simple steps.

\subsection{Computing the Fourier extension a Gaussian} Throughout the paper we fix the $L^2$-normalized Gaussian
\begin{equation}\label{Gaussian}
g(\xi) = e^{-\frac{\pi |\xi|^2}{2}}.
\end{equation}
Letting $x = (x_1, x_2, \ldots, x_d)$, a routine computation yields
\begin{equation}\label{20191007_14:11pm}
Tg(x, t) = \prod_{k=1}^d \left(\frac12 - \frac{i \sigma_k t}{\pi}\right)^{-1/2} \, e^{-\frac{x_k^2}{4\pi\left(\frac12 -\frac{i \sigma_k t}{\pi}\right)}}.
\end{equation}
Hence it follows that
\begin{equation}\label{20191007_14:12pm}
|Tg(x, t)| =  \left(\frac{1}{4} + \frac{t^2}{\pi^2}\right)^{-d/4}\,e^{-\frac{\pi |x|^2}{2(\pi^2 + 4t^2)}}.
\end{equation}

\subsection{Computing the first variation} Let $\phi \in C^{\infty}_c(\R^d)$. For $\varepsilon \in \R$ small we define
\begin{equation*}
\Psi(\varepsilon) = \frac{\|T(g + \varepsilon \phi)\|_{L^q(\R^{d+1})}}{\|g + \varepsilon \phi\|_{L^p(\R^{d})}}.
\end{equation*}
We say that $g$ is a critical point of \eqref{20191004_12:11am} if $\Psi'(0) = 0$ for all such $\phi$. If we further assume the orthogonality condition 
$$\int_{\R^d} g(\xi)^{p-1} \, \phi(\xi)\,\dxi = 0\,,$$
a routine computation yields
\begin{equation*}
\Psi'(0) = \frac{\|Tg\|_{q}^{1-q}}{\|g\|_{p}}\, {\rm Re} \left(\int_{\R^{d+1}} |Tg|^{q-2}\,  \,Tg \,\, \overline{T\phi} \right).
\end{equation*}
Using \eqref{operator}, \eqref{20191007_14:11pm}, \eqref{20191007_14:12pm} and Fubini's theorem (in the following order: first change $x$ and $\xi$, then integrate in $x$, then change $\xi$ and $t$; note that we have absolute integrability of the integrands in each step) we observe that 
\begin{align*}
\begin{split}
& {\rm Re} \int_{\R^{d+1}} |Tg|^{q-2}\,  \,Tg \,\, \overline{T\phi} \\
& = {\rm Re} \int_{\R}  \left(\frac{1}{4} + \frac{t^2}{\pi^2}\right)^{-\frac{d(q-2)}{4}} \prod_{k=1}^d \left(\frac12 - \frac{i \sigma_k t}{\pi}\right)^{-\frac12} \int_{\R^{d}}e^{-\frac{\pi (q-2) |x|^2 }{2(\pi^2 + 4t^2)}} \left(\prod_{k=1}^d  e^{-\frac{x_k^2}{4\pi\left(\frac12 -\frac{i \sigma_k t}{\pi}\right)}}\right) \times \\
&  \ \ \ \ \ \ \ \ \ \ \ \ \ \ \ \  \ \ \ \ \ \ \ \ \ \ \ \ \ \ \ \  \ \ \ \ \ \ \ \ \ \ \ \ \ \ \ \  \times \left(\int_{\R^d} e^{-i x\cdot \xi - i t Q(\xi)} \,\overline{\phi(\xi)}\,\,\dxi\right) \dx\,\dt\\
& = {\rm Re} \int_{\R}  \left(\frac{1}{4} + \frac{t^2}{\pi^2}\right)^{-\frac{d(q-2)}{4}}\!\! \prod_{k=1}^d \left(\frac12 - \frac{i \sigma_k t}{\pi}\right)^{-\frac12}  \left(\frac{\pi^2 + 4t^2}{\frac{q-1}{2} + \frac{i \sigma_k t}{\pi}}\right)^{\frac12} \times\\
&  \ \ \ \ \ \ \ \ \ \ \ \ \ \ \ \  \ \ \ \ \ \ \ \ \ \ \ \ \ \ \ \  \ \ \ \ \ \ \ \ \ \ \ \ \ \ \ \  \times \int_{\R^{d}}\left(\prod_{k=1}^d e^{-\frac{\xi_k^2 (\pi^2 + 4t^2)}{4\pi \left(\frac{q-1}{2} + \frac{i \sigma_k t}{\pi}\right)}}\right) e^{- i t Q(\xi)} \,\overline{\phi(\xi)}\,\,\dxi \,\dt\\
& = {\rm Re} \int_{\R^d}  \overline{\phi(\xi)} \int_{\R^{d}}  \! \left(\frac{1}{4} + \frac{t^2}{\pi^2}\right)^{-\frac{d(q-2)}{4}}  \!\!\! \left( \prod_{k=1}^d \left(\frac12 - \frac{i \sigma_k t}{\pi}\right)^{-\frac12}\!\! \left(\frac{\pi^2 + 4t^2}{\frac{q-1}{2} + \frac{i \sigma_k t}{\pi}}\right)^{\frac12} \!  e^{-\frac{\xi_k^2 (\pi^2 + 4t^2)}{4\pi \left(\frac{q-1}{2} + \frac{i \sigma_k t}{\pi}\right)}} \right) \! e^{- i t Q(\xi)} \,\dt \,\dxi.
\end{split}
\end{align*}
We then find that $g$ is a critical point if and only if the following Euler-Lagrange equation is satisfied:
\begin{align}\label{20191007_15:18pm}
\int_{\R}\left(\frac{1}{4} + \frac{t^2}{\pi^2}\right)^{-\frac{d(q-2)}{4}}\!e^{- i t Q(\xi)}  \left(\prod_{k=1}^d  \left(\frac{\frac{1}{2} + \frac{i\sigma_kt}{\pi}}{\frac{q-1}{2} + \frac{i\sigma_kt}{\pi}} \right)^{\frac12} e^{-\frac{\xi_k^2 (\pi^2 + 4t^2)}{4\pi \left(\frac{q-1}{2} + \frac{i \sigma_k t}{\pi}\right)}}\right) \dt  =  \lambda \, g(\xi)^{p-1},
\end{align}
where $\lambda$ is a constant.

\subsection{Using the hiperbolicity} Let us now assume that the quadratic form $Q$ has $d^+$ signs $+1$ (say, $\sigma_1, \sigma_2, \ldots, \sigma_{d^+}$) and $d^{-}$ signs $-1$ (say, $\sigma_{d^+ +1}, \ldots, \sigma_{d}$), where $d^+ + d^- = d$. We may assume without loss of generality that $d^+ \geq d^- \geq 1$. We shall write a vector $\xi  = (\xi_1, \xi_2, \ldots, \xi_d) \in \R^d$ as $\xi = (\xi^+, \xi^-) \in \R^{d^+} \times \R^{d^-}$, with the understanding that $\xi^+ = (\xi_1, \ldots, \xi_{d^+})$ and $\xi^- = (\xi_{d^+ + 1}, \ldots, \xi_d)$. If we group the terms with the same signature, the Euler-Lagrange equation \eqref{20191007_15:18pm} reads
\begin{align*}
& \int_{\R}\left(\frac{1}{4} + \frac{t^2}{\pi^2}\right)^{-\frac{d(q-2)}{4}} \left(\frac{\frac{1}{2} + \frac{it}{\pi}}{\frac{q-1}{2} + \frac{it}{\pi}} \right)^{\frac{d^+}{2}}\left(\frac{\frac{1}{2} - \frac{it}{\pi}}{\frac{q-1}{2} - \frac{it}{\pi}} \right)^{\frac{d^-}{2}} e^{-\frac{\pi |\xi^+|^2}{4} \left( \frac{1 + \frac{2(q-1)it}{\pi}}{\frac{q-1}{2} + \frac{it}{\pi}}\right)}\,e^{-\frac{\pi |\xi^-|^2}{4} \left( \frac{1 - \frac{2(q-1)it}{\pi}}{\frac{q-1}{2} - \frac{it}{\pi}}\right)}\,\dt\\
& =  \lambda \, e^{-\frac{\pi (p-1)|\xi^+|^2}{2}}e^{-\frac{\pi (p-1)|\xi^-|^2}{2}}.
\end{align*}
Changing variables $t/\pi =s$, and clearing out constant factors we arrive at 
\begin{align}\label{20191007_16:05pm}
\begin{split}
& \int_{\R} (1 + 4s^2)^{-\frac{d(q-2)}{4}} \left(\frac{1 + 2is}{q-1 + 2is} \right)^{\frac{d^+}{2}} \left(\frac{1 - 2is}{q-1 - 2is} \right)^{\frac{d^-}{2}} e^{-\frac{\pi |\xi^+|^2}{2} \left( \frac{1 - (p-1)(q-1) + 2is(q-p)}{q-1 + 2is}\right)} \times \\
& \ \ \ \ \ \ \ \ \ \ \ \ \ \ \ \  \ \ \ \ \ \ \ \ \ \ \ \ \ \ \ \  \ \ \ \ \ \ \ \ \ \ \ \ \ \ \ \ \times e^{-\frac{\pi |\xi^-|^2}{2} \left( \frac{1 - (p-1)(q-1) - 2is(q-p)}{q-1 - 2is}\right)} \,\ds= \lambda\,,
\end{split}
\end{align}
where $\lambda$ is a constant (not necessarily the same as before). 

\smallskip

At this point observe that $d(q-2)/2 >1$ in our regime, which makes the integrand above absolutely integrable. Christ and Quilodr\'{a}n \cite{CQ} treated a similar integral as in \eqref{20191007_16:05pm}, with $d^- =0$ and $p\neq 2$, via contour integration in the complex plane. In our situation we may take advantage of the hiperbolicity to proceed with a purely real analysis. Regarding $|\xi^+|^2 = r^+$ and $|\xi^-|^2 = r^-$ as real variables in $[0,\infty)$ we may differentiate expression \eqref{20191007_16:05pm} $k$ times in the variable $r^+$ and $k$ times in the variable $r^-$, and then plug in $(r^+, r^-) = (0,0)$ to obtain (note that $1 - (p-1)(q-1) = -2p/d$)
\begin{equation}\label{20191007_16:40pm}
\int_{\R} \left( \frac{\frac{p^2}{d^2} + s^2(q-p)^2}{(q-1)^2 + 4s^2} \right)^k(1 + 4s^2)^{-\frac{d(q-2)}{4}} \left(\frac{1 + 2is}{q-1 + 2is} \right)^{\frac{d^+}{2}} \left(\frac{1 - 2is}{q-1 - 2is} \right)^{\frac{d^-}{2}}\,\ds = 0
\end{equation}
for all $k \in \N$. Let
$$A(s) = {\rm Re} \left[(1 + 4s^2)^{-\frac{d(q-2)}{4}} \left(\frac{1 + 2is}{q-1 + 2is} \right)^{\frac{d^+}{2}} \left(\frac{1 - 2is}{q-1 - 2is} \right)^{\frac{d^-}{2}}\right].$$
When $s$ is real we note that $A(s)$ is an even function that is continuous and non-identically zero (e.g. just observe the behaviour as $s \to \infty$). From \eqref{20191007_16:40pm} we have
$$\int_{0}^{\infty} \left( \frac{\frac{p^2}{d^2} + s^2(q-p)^2}{(q-1)^2 + 4s^2} \right)^k \, A(s) \, \ds = 0$$
for all $k \in \N$. Letting $B(s) = \left(\frac{\frac{p^2}{d^2} + s^2(q-p)^2}{(q-1)^2 + 4s^2} \right) A(s)$, by linearity we then have
\begin{equation}\label{20191007_20:46pm}
\int_{0}^{\infty} P\left( \frac{\frac{p^2}{d^2} + s^2(q-p)^2}{(q-1)^2 + 4s^2} \right) B(s) \,\ds = 0\,,
\end{equation}
where $P$ is any polynomial. 

\subsection{Changing variables} Note that in our regime we always have $q > 2(d+1)/d > p$, but as $p \to (2(d+1)/d)^-$ we have $q \to (2(d+1)/d)^+$. In every dimension $d \geq 2$ there exists an exponent $p_d \in (2, 2(d+1)/d)$ such that, for $q_d = (d+2)p_d'/d$, we have
\begin{equation}\label{20191007_20:55pm}
\frac{\frac{p_d}{d}}{q_d-p_d} = \frac{q_d-1}{2} = \kappa_d.
\end{equation}
This is in fact given by
$$p_d = \frac{ - d^2 + 8d + 4 + \sqrt{(d^2 - 8d -4)^2 + 32d^3}}{8d}.$$
If $p \neq p_d$, the change of variables
$$t = \varphi^{-1}(s) = \frac{\frac{p^2}{d^2} + s^2(q-p)^2}{(q-1)^2 + 4s^2} $$
is a bijection between $s \in (0,\infty)$ and $t \in \left(\frac{p^2/d^2}{(q-1)^2}, \frac{(q-p)^2}{4}\right)$ if $p  < p_d$, or $t \in \left( \frac{(q-p)^2}{4}, \frac{p^2/d^2}{(q-1)^2}\right)$ if $p  > p_d$. Hence, if $p \neq p_d$, equation \eqref{20191007_20:46pm} becomes
\begin{equation*}
\int_{\frac{p^2/d^2}{(q-1)^2}}^{\frac{(q-p)^2}{4}} P(t) \, B(\varphi(t)) \,\varphi'(t)\,\dt = 0.
\end{equation*}
Since $P$ is an arbitrary polynomial and the function $t \mapsto B(\varphi(t)) \,\varphi'(t)$ is continuous in the interior of the interval, not identically zero, and integrable in the interval, we get a contradiction by Weierstrass approximation.

\subsection{The case $p = p_d$} In this case we return to \eqref{20191007_16:05pm} and consider the diagonal $|\xi^+|^2 = |\xi^-|^2 = r \geq 0$. Recalling that $1 - (p-1)(q-1) = -2p/d$ and using \eqref{20191007_20:55pm} we obtain
\begin{equation*}
 \int_{\R} (1 + 4s^2)^{-\frac{d(q_d-2)}{4}} \left(\frac{1 + 2is}{q_d-1 + 2is} \right)^{\frac{d^+}{2}} \left(\frac{1 - 2is}{q_d-1 - 2is} \right)^{\frac{d^-}{2}} e^{-\frac{\pi r (q_d - p_d)}{2} \left( \frac{-\kappa_d + is}{\kappa_d + is} +  \frac{-\kappa_d - is}{\kappa_d - is}\right)} \,\ds= \lambda.
\end{equation*}
Differentiating $k$ times this function of $r$, and then plugging in $r=0$, we obtain
 \begin{equation*}
 \int_{\R} \left(\frac{-\kappa_d^2 + s^2}{\kappa_d^2 + s^2}\right)^k(1 + 4s^2)^{-\frac{d(q_d-2)}{4}} \left(\frac{1 + 2is}{q_d-1 + 2is} \right)^{\frac{d^+}{2}} \left(\frac{1 - 2is}{q_d-1 - 2is} \right)^{\frac{d^-}{2}} \,\ds= 0
\end{equation*}
for all $k \in \N$. We proceed as before by observing that the change of variables 
$$t = \gamma^{-1}(s) = \frac{-\kappa_d^2 + s^2}{\kappa_d^2 + s^2}$$
is a bijection between $s \in (0,\infty)$ and $t \in (-1,1)$, to reach a contradiction via Weierstrass approximation.

\section{Remarks on the saddle surface}

Let $S = \{(\xi_1,\xi_2, \tau) \in \R^3 \ : \ \tau = \xi_1^2 - \xi_2^2\}$. Here we address the $L^2(\R^2) \to L^4(\R^3)$ Strichartz inequality associated to $S$, presenting below a few important facts of this particular situation. Despite having the convolution structure, the quest for the sharp form of this inequality seems to be a subtle issue, as the previous methods for the paraboloid \cite{Fo07, HZ06} are not easily adaptable. This relates to the discussion in \cite[Appendix A]{DMPS}.

\medskip

In what follows $\xi = (\xi_1, \xi_2), \ \omega = (\omega_1, \omega_2), \ \eta = (\eta_1, \eta_2), \ \nu = (\nu_1, \nu_2)$ will be vectors in $\R^2$. For a Schwartz function $f:\R^2 \to \C$, we may use delta calculus to compute $\|Tf\|_{L^4(\R^{3})}$ as follows:
\begin{align*}
\|Tf\|^4_{L^4(\R^{3})} = \int_{(\R^2)^4} \delta_2(\xi + \omega - \eta - \nu)\,  \delta_1(Q(\xi) + Q(\omega) - Q(\eta) - Q(\nu))\, f(\xi) f(\omega) \overline{f(\eta)} \overline{f(\nu)} \,\dxi\,\domega\,\deta\,\dnu,
\end{align*}
where $\delta_d$ denotes the $d$-dimensional Dirac delta. We may consider an operator $K$ that acts on functions $F: \R^4 \to \C$ given by 
\begin{equation*}
KF(\eta, \nu) = \int_{(\R^2)^2} \delta_2(\xi + \omega - \eta - \nu)\,  \delta_1(Q(\xi) + Q(\omega) - Q(\eta) - Q(\nu))\, F(\xi, \omega) \,\dxi\,\domega.
\end{equation*}
We would then have
$$\|Tf\|^4_{L^4(\R^{3})} = \langle K(f\otimes f), (f\otimes f)\rangle\,,$$
where $\langle \cdot \, , \, \cdot \rangle$ denotes the inner product of $L^2(\R^4)$, and $(f\otimes f)(\eta, \nu):= f(\eta)f(\nu)$ is the usual tensor product of functions. We now make a few observations about this operator $K$.
\begin{proposition}
Let ${\bf 1}$ be the constant function equal to $1$ on $\R^4$. Then $K {\bf 1} \equiv \infty$.
\end{proposition}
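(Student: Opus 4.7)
The strategy is to evaluate $K{\bf 1}(\eta,\nu)$ explicitly using delta calculus and to observe that the resulting $1$-dimensional delta integral over $\R^2$ is $+\infty$ for every $(\eta,\nu)$.

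First, I would use $\delta_2(\xi+\omega-\eta-\nu)$ to set $\omega = \eta + \nu - \xi$, and then substitute $\mu = \xi - \eta$ (so that $\omega = \nu - \mu$). By the bilinearity of $Q$, a short computation yields
\begin{equation*}
Q(\xi) + Q(\omega) - Q(\eta) - Q(\nu) = 2Q(\mu) + 2B(\eta-\nu,\mu),
\end{equation*}
where $B((x_1,x_2),(y_1,y_2)) := x_1 y_1 - x_2 y_2$ polarizes $Q$. Completing the square in each coordinate of $\mu$ and translating, the problem reduces to verifying that
\begin{equation*}
K{\bf 1}(\eta,\nu) = \int_{\R^2} \delta_1\!\bigl(2u^2 - 2v^2 - C\bigr)\, du\, dv, \qquad C := \tfrac12\, Q(\eta-\nu),
\end{equation*}
equals $+\infty$ for every $C \in \R$.

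For $C \neq 0$, I would parametrize the appropriate branch of $\{2u^2 - 2v^2 = C\}$ by hyperbolic coordinates: for $C > 0$ and $u>0$ set $u = r\cosh\theta$, $v = r\sinh\theta$, which has Jacobian $r$ and satisfies $2u^2-2v^2=2r^2$. The integral then factors as
\begin{equation*}
\int_{-\infty}^{\infty} d\theta \cdot \int_0^\infty \delta_1(2r^2 - C)\, r\, dr \;=\; \tfrac14 \int_{-\infty}^{\infty} d\theta \;=\; +\infty,
\end{equation*}
and analogous (nonnegative) contributions come from the remaining three branches (and from the dual parametrization $u=r\sinh\theta$, $v=r\cosh\theta$ when $C<0$). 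For $C = 0$ the zero set is the pair of lines $\{u = \pm v\}$, and the coarea formula together with $|\nabla(2u^2-2v^2)| = 4\sqrt{u^2+v^2}$ produces a logarithmic divergence at infinity along each line.

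Because every contribution above is nonnegative, no cancellation can rescue finiteness. The main obstacle is not conceptual but a matter of rigour: one has to justify the formal manipulation of Dirac deltas. This can be done cleanly by replacing $\delta_1$ and $\delta_2$ with Gaussian mollifiers $\delta_1^\epsilon, \delta_2^\epsilon$, computing the (finite, nonnegative) approximant of $K{\bf 1}$ by Fubini, and passing to the limit $\epsilon \to 0^+$ with monotone convergence (after noting that the mollified kernel converges pointwise to the integrand above). Conceptually the statement simply records the fact that the indefinite signature of $Q$ turns the constraint level set into a non-compact hyperboloid, which is exactly the geometric feature absent in the elliptic (paraboloid) case where the corresponding quantity is finite.
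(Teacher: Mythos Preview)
Your proof is correct and follows essentially the same approach as the paper's: both reduce $K\mathbf{1}(\eta,\nu)$ to a two-dimensional integral of $\delta_1$ applied to an indefinite quadratic form (the paper uses the change $\alpha=\xi+\omega$, $\beta=\xi-\omega$ rather than your $\mu=\xi-\eta$, but the resulting integral is the same up to constants) and conclude it is infinite. The paper simply writes $=\infty$ at that stage, whereas you supply the hyperbolic-coordinate and coarea justifications, so your version is more detailed but not a different strategy.
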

\begin{proof}
Indeed, changing variables $\xi + \omega = \alpha$ and $\xi - \omega = \beta$ we have
\begin{align*}
\begin{split}
K {\bf 1}(\eta, \nu) & = \frac{1}{4} \int_{(\R^2)^2} \delta_2(\alpha - \eta - \nu)\,  \delta_1\left(\left(\frac{\alpha_1^2 + \beta_1^2}{2}\right) - \left(\frac{\alpha_2^2 + \beta_2^2}{2}\right) - Q(\eta) - Q(\nu)\right) \,{\rm d}\alpha\,{\rm d}\beta \\
& = \frac{1}{4} \int_{\R^2} \delta_1\left(\left(\frac{\beta_1^2 - \beta_2^2}{2}\right) + \left(\frac{(\eta_1 + \nu_1)^2 - (\eta_2 + \nu_2)^2}{2}\right) - Q(\eta) - Q(\nu)\right)\,{\rm d}\beta_1 \,{\rm d}\beta_2\\
& = \infty.
\end{split}
\end{align*}
\end{proof}
This is an important difference to the approach of Hundertmark and Zharnitsky \cite{HZ06} for the analogous estimate for the paraboloid. In that case, the corresponding operator $K$ applied to the constant function ${\bf 1}$ yielded a constant value, and this was crucial to establish the $L^2$-boundedness of the operator. In fact, we have the following negative result in this case.

\begin{proposition}\label{not bounded}
$K: L^2(\R^4) \to L^2(\R^4)$ is not bounded.
\end{proposition}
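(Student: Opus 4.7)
The plan is to exhibit $F\in L^2(\R^4)$ for which $KF\notin L^2(\R^4)$, re-using the change of variables from the previous proposition. Setting $\alpha = \xi+\omega$, $\beta = \xi-\omega$ and combining the identities $Q(\xi)+Q(\omega) = \tfrac12(Q(\alpha)+Q(\beta))$ and $Q(\eta+\nu)+Q(\eta-\nu) = 2Q(\eta)+2Q(\nu)$, the $\delta_2$ integrates the $\alpha$-variable out and the remaining delta simplifies, yielding
$$KF(\eta,\nu)=\tfrac12\int_{\R^2}\delta_1\!\big(Q(\beta)-Q(\eta-\nu)\big)\,G(\eta+\nu,\beta)\,\d\beta,\qquad G(\alpha,\beta):=F\!\left(\tfrac{\alpha+\beta}{2},\tfrac{\alpha-\beta}{2}\right).$$

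I would then test $K$ on the separable ansatz $G(\alpha,\beta)=\phi(\alpha)\psi(\beta)$, i.e.\ $F(\xi,\omega)=\phi(\xi+\omega)\psi(\xi-\omega)$, which factorizes
$$KF(\eta,\nu)=\tfrac12\,\phi(\eta+\nu)\,H(\eta-\nu),\qquad H(\gamma):=\int_{\R^2}\delta_1\!\big(Q(\beta)-Q(\gamma)\big)\,\psi(\beta)\,\d\beta.$$
Passing to hyperbolic-polar coordinates $\beta_1=r\cosh u$, $\beta_2=r\sinh u$ on the timelike wedge $\{\beta_1>|\beta_2|\}$, so that $Q(\beta)=r^2$ and $\d\beta=r\,\d r\,\d u$, for any non-trivial $\psi\ge 0$ compactly supported there one finds
$$H(\gamma)=\tfrac12\int_\R\tilde\psi(u,\sqrt{Q(\gamma)})\,\d u\qquad \text{whenever}\ Q(\gamma)>0,$$
so $H$ depends only on $Q(\gamma)$ and is constant along each timelike hyperbola $\{Q=c\}$, $c>0$.

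The key observation is that such a non-zero $H$ cannot lie in $L^2(\R^2)$, since every annular region $\{c_1\le Q\le c_2\}$ sandwiched between two timelike hyperbolas has infinite area. Picking any non-zero $\phi\in L^2(\R^2)$ and any such $\psi$ then produces $F\in L^2(\R^4)$ with
$$\|F\|_{L^2(\R^4)}^2=\tfrac14\|\phi\|_{L^2}^2\|\psi\|_{L^2}^2<\infty\quad\text{but}\quad \|KF\|_{L^2(\R^4)}^2=\tfrac1{16}\|\phi\|_{L^2}^2\|H\|_{L^2}^2=\infty,$$
contradicting any putative $L^2 \to L^2$ bound for $K$.

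The main obstacle I anticipate is that $KF$ is defined only formally as a distribution, so the manipulations above must be validated. I would make this rigorous by replacing $\delta_1$ with a smooth mollifier $\delta_1^\varepsilon$, forming the bounded operators $K^\varepsilon$, and testing the bilinear pairing $\langle K^\varepsilon F,G_R\rangle$ against $G_R(\eta,\nu):=\phi(\eta+\nu)\chi_R(\eta-\nu)$, where $\chi_R$ is a bump concentrated on an arbitrarily long arc of a fixed hyperbola $\{Q=c\}$. A direct calculation along the lines above shows that this pairing grows without bound as $R\to\infty$, uniformly in $\varepsilon$, which defeats any uniform $L^2\to L^2$ bound for the $K^\varepsilon$ and hence for $K$ itself.
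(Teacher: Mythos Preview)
Your argument is correct, and it shares with the paper the same structural first move: the change of variables $\alpha=\xi+\omega$, $\beta=\xi-\omega$, which reduces $KF(\eta,\nu)$ to an integral over a level set $\{Q(\beta)=Q(\eta-\nu)\}$ depending on $(\eta+\nu,\eta-\nu)$ in a separable way. The paper then makes the specific choice $F=g$ the Gaussian on $\R^4$, which in the $(\alpha,\beta)$ variables is again a product of Gaussians; this forces an explicit computation, and the resulting $H$ turns out to be $\tfrac12 K_0\big(\tfrac{\pi}{4}|Q(\gamma)|\big)$ with $K_0$ the modified Bessel function. The divergence of $\|Kg\|_{L^2}^2$ is then traced back to the divergence of $\int_{|x|}^\infty \frac{\d y}{\sqrt{y^2-x^2}}$, which is the same infinite-area-of-hyperbolic-annuli phenomenon you isolate, just seen through the Bessel asymptotics.

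Your choice of a compactly supported $\psi$ in the timelike wedge is genuinely cleaner: it sidesteps the special-function identification entirely and makes the mechanism (constancy of $H$ along hyperbolas of infinite length) completely transparent. The paper's Gaussian choice, on the other hand, has the virtue of showing that even the most natural tensor-product input $g\otimes g$ already blows up, which connects more directly to the theme of the section.

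Your final paragraph is over-cautious. Once $\psi\in C_c^\infty$ is supported away from the null cone $\{Q=0\}$, the coarea formula gives $H(\gamma)=\int_{\{Q=Q(\gamma)\}}\psi\,\d\sigma/|\nabla Q|$ as an honest, continuous function of $Q(\gamma)>0$, and your $F$ can be taken in $C_c^\infty(\R^4)$; no mollification is needed to make $KF$ well-defined pointwise. The direct computation already shows $KF\notin L^2$, which is all that is required.
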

\begin{proof}
Let us compute $Kg$, where $g$ is the $L^2$-normalized Gaussian defined in \eqref{Gaussian}. Changing variables $\xi + \omega = \alpha$ and $\xi - \omega = \beta$ we have
\begin{align*}
Kg(\eta, \nu) & = \frac{1}{4} \int_{(\R^2)^2} \delta_2(\alpha - \eta - \nu)\,  \delta_1\left(\left(\frac{\alpha_1^2 + \beta_1^2}{2}\right) - \left(\frac{\alpha_2^2 + \beta_2^2}{2}\right) - Q(\eta) - Q(\nu)\right) \,e^{- \frac{\pi}{4} (|\alpha|^2 + |\beta|^2)}{\rm d}\alpha\,{\rm d}\beta\\
& =  e^{- \frac{\pi}{4} |\eta + \nu|^2} \int_{0}^{\infty} \int_{0}^{\infty}\delta_1\left(\left(\frac{\beta_1^2 - \beta_2^2}{2}\right) + \left(\frac{-(\eta_1 - \nu_1)^2 + (\eta_2 - \nu_2)^2}{2}\right)\right)\,e^{- \frac{\pi}{4}  |\beta|^2}\,{\rm d}\beta_1 \,{\rm d}\beta_2\\
& =  e^{- \frac{\pi}{4} |\eta + \nu|^2} \int_{0}^{\infty} \int_{0}^{\infty}\delta_1\left(u_1 - u_2 -(\eta_1 - \nu_1)^2 + (\eta_2 - \nu_2)^2\right)\,e^{- \frac{\pi}{4}  (u_1 + u_2)}\,\frac{{\rm d}u_1 \,{\rm d}u_2}{2 \sqrt{u_1 \,u_2}}.
\end{align*}
We now change variables by placing $u_1 - u_2 = x$ and $u_1 + u_2 = y$ to get
\begin{align*}
Kg(\eta, \nu) & = e^{- \frac{\pi}{4} |\eta + \nu|^2}\int_{-\infty}^{\infty} \delta_1\left(x -(\eta_1 - \nu_1)^2 + (\eta_2 - \nu_2)^2\right) \left(\int_{|x|}^{\infty} e^{- \frac{\pi}{4}y}\frac{{\rm d}y }{2 \sqrt{y^2 - x^2}} \right) {\rm d}x\\
& =  e^{- \frac{\pi}{4} |\eta + \nu|^2} \int_{-\infty}^{\infty} \delta_1\left(x -(\eta_1 - \nu_1)^2 + (\eta_2 - \nu_2)^2\right)  \frac{1}{2} K_0\big(\tfrac{\pi |x|}{4}\big) \,{\rm d}x\\
& = \frac12 e^{- \frac{\pi}{4} |\eta + \nu|^2}K_0\Big(\tfrac{\pi |(\eta_1 - \nu_1)^2 - (\eta_2 - \nu_2)^2|}{4}\Big),
\end{align*}
where $K_0$ denotes the modified Bessel function of second kind and order zero. Let us now show that $Kg \notin L^2(\R^4)$. By a change of variables $\eta + \nu = \alpha$ and $\eta - \nu = \beta$ we get 
\begin{align*}
\|Kg\|^2_{L^2(\R^4)} & = \frac{1}{4} \int_{\R^4}  e^{- \frac{\pi}{2} |\eta + \nu|^2}K_0\Big(\tfrac{\pi |(\eta_1 - \nu_1)^2 - (\eta_2 - \nu_2)^2|}{4}\Big)^2 \deta\,  \dnu\\
& = \frac{1}{16} \int_{\R^4}  e^{- \frac{\pi}{2} (\alpha_1^2 + \alpha_2^2)}\,K_0\Big(\tfrac{\pi |\beta_1^2 - \beta_2^2|}{4}\Big)^2\,{\rm d}\alpha \, {\rm d}\beta\\
& = \frac{1}{2}\int_0^{\infty} \int_0^{\infty}  K_0\Big(\tfrac{\pi |\beta_1^2 - \beta_2^2|}{4}\Big)^2\, {\rm d}\beta_1 \,{\rm d}\beta_2\\
& = \frac{1}{2}\int_0^{\infty} \int_0^{\infty} K_0\Big(\tfrac{\pi |u_1 - u_2|}{4}\Big)^2\, \frac{{\rm d}u_1 \,{\rm d}u_2}{4 \sqrt{u_1u_2}}\\
& = \frac{1}{2}\int_{-\infty}^{\infty} K_0\Big(\tfrac{\pi |x|}{4}\Big)^2 \left( \int_{|x|}^{\infty} \, \frac{ \,{\rm d}y}{4 \sqrt{y^2 - x^2}}\right) {\rm d}x \\
& = \infty.
\end{align*}
\end{proof}

Note that the inequality 
\begin{align}\label{20191007_23:43}
|\langle KF, F\rangle | \leq C \|F\|^2_{L^2(\R^4)}
\end{align}
cannot hold for all $F \in L^2(\R^4)$, otherwise a polarization argument would yield $K$ bounded on $L^2(\R^4)$, contradicting Proposition \ref{not bounded}.  Nevertheless, inequality \eqref{20191007_23:43} does hold restricted to the subclass of $L^2(\R^4)$ consisting of functions which are tensor products $f \otimes f$ with $f \in L^2(\R^2)$ (this is equivalent to the Fourier extension estimate). From the definition of the operator $K$ one can verify the following symmetry
$$KF(\eta_1, \eta_2, \nu_1, \nu_2) = KF(\nu_1, \eta_2, \eta_1, \nu_2).$$
Let $R$ be the reflection operator defined by $R(F)(\eta_1, \eta_2, \nu_1, \nu_2) := F(\nu_1, \eta_2, \eta_1, \nu_2)$. Let $E_1 = \{F \in C^{\infty}_c(\R^4) \ : \  F = R(F)\}$ and $E_2 = \{F \in C^{\infty}_c(\R^4) \ : \  F = -R(F)\}$. Note that $E_1$ and $E_2$ are orthogonal in $L^2(\R^4)$. Given $F \in C^{\infty}_c(\R^4)$ we can always write $F = F_1 + F_2$ with $F_1 \in E_1$ and $F_2 \in E_2$, namely by putting $F_1 = (F + R(F))/2$ and $F_2 = (F - R(F))/2$. One can also verify that 
$$ \langle KF, F\rangle = \langle KF_1, F_1\rangle.$$
It is tempting to then claim that an extremizer $f$ of our Fourier extension inequality should be such that $f \otimes f$ has the symmetry of $E_1$ but this is not, in principle, a valid claim. In fact, recall that \eqref{20191007_23:43} does not hold in all $L^2(\R^4)$, and if we try to symmetrize a tensor product $f \otimes f$ by changing it to the function 
$$\frac{1}{2} \big(f \otimes f + R(f \otimes f) \big)(\eta_1, \eta_2, \nu_1, \nu_2) = \frac{1}{2}\big( f(\eta_1,\eta_2)f(\nu_1, \nu_2) + f(\nu_1,\eta_2)f(\eta_1,\nu_2)\big)$$
we may leave the subclass of tensor products.


\begin{thebibliography}{99}

\bibitem{CQ}
M. Christ and R. Quilodr\'{a}n, 
\newblock Gaussians rarely extremize adjoint Fourier restriction inequalities for paraboloids, 
\newblock Proc. Amer. Math. Soc. 142 (2014), no. 3, 887--896. 

\bibitem{DMPS}
B. Dodson, J. Marzuola, B. Pausader and D. Spirn, 
\newblock The profile decomposition for the hyperbolic Schrödinger equation,
\newblock Illinois J. Math. 62 (2018), no. 1-4, 293--320.

\bibitem{Fo07}
D. Foschi,
\newblock  Maximizers for the Strichartz inequality, 
\newblock  J. Eur. Math. Soc. (JEMS) 9 (2007), no.~4, 739--774.

\bibitem{HZ06}
D. Hundertmark and V. Zharnitsky, 
\newblock On sharp Strichartz inequalities in low dimensions, 
\newblock Int. Math. Res. Not. IMRN (2006), Art. ID 34080, 1--18.


\bibitem{RV}
K. Rogers and A. Vargas, 
\newblock A refinement of the Strichartz inequality on the saddle and applications,
\newblock J. Funct. Anal. 241 (2006), no. 1, 212--231. 

\bibitem{St77} 
R. S. Strichartz, 
\newblock Restrictions of Fourier transforms to quadratic surfaces and decay of solutions of wave equations, 
\newblock Duke Math. J. 44 (1977), no.~3, 705--714. 

\bibitem{Var}
A. Vargas, 
\newblock Restriction theorems for a surface with negative curvature,
\newblock Math. Z. 249 (2005), no. 1, 97--111.

\end{thebibliography}
\end{document}